\newtheorem{theorem}{Theorem}[section]
\newtheorem{lemma}[theorem]{Lemma}
\newtheorem{cor}[theorem]{Corollary}
\theoremstyle{definition}
\newtheorem{defn}[theorem]{Definition}
\newtheorem{remark}[theorem]{Remark}
\newtheorem{example}[theorem]{Example}
\numberwithin{equation}{section}
\newcommand{\CC}{\mathbb{C}}
\newcommand{\FF}{\mathbb{F}}
\newcommand{\QQ}{\mathbb{Q}}
\newcommand{\ZZ}{\mathbb{Z}}
\newcommand{\frako}{\mathfrak{o}}
\newcommand{\frakq}{\mathfrak{q}}
\DeclareMathOperator{\AST}{AST}
\DeclareMathOperator{\Aut}{Aut}
\DeclareMathOperator{\End}{End}
\DeclareMathOperator{\Gal}{Gal}
\DeclareMathOperator{\GL}{GL}
\DeclareMathOperator{\image}{image}
\DeclareMathOperator{\Out}{Out}
\DeclareMathOperator{\PGL}{PGL}
\DeclareMathOperator{\rank}{rank}
\DeclareMathOperator{\Res}{Res}
\DeclareMathOperator{\Sp}{Sp}
\DeclareMathOperator{\Spec}{Spec}
\begin{document}

\title{Endomorphism fields of abelian varieties}
\author{Robert Guralnick and Kiran S. Kedlaya}
\date{May 30, 2017}
\thanks{Thanks to Francesc Fit\'e, Ga\"el R\'emond, Jean-Pierre Serre, Alice Silverberg, and Andrew Sutherland for feedback. Guralnick was partially supported by NSF grants
DMS-1302886 and DMS-1600056. Kedlaya was supported by NSF grant DMS-1501214 and UC San Diego
(Warschawski Professorship).}

\maketitle

\begin{abstract}
We give a sharp divisibility bound, in terms of $g$, for the degree of the field extension required to realize the endomorphisms of an abelian variety of dimension $g$ over an arbitrary number field; this refines a result of Silverberg. This follows from a stronger result giving the same bound for the order of the component group of the Sato-Tate group of the abelian variety, which had been proved for abelian surfaces by Fit\'e--Kedlaya--Rotger--Sutherland. The proof uses Minkowski's reduction method, but with some care required in the extremal cases when $p$ equals 2 or a Fermat prime.
\end{abstract}

\section{Introduction}

For $A$ an abelian variety over a field $K$, the \emph{endomorphism field} of $A$ is the minimal algebraic extension $L$ of $K$ such that $\End(A_L) = \End(A_{\overline{L}})$. The purpose of this paper is to establish a bound on the degree $[L:K]$ in terms of the dimension of $A$; more precisely, we compute the LCM of all possible degrees as $A,K$ vary while $\dim_K A$ remains fixed.

Before stating our result, we state a prior result of Silverberg \cite{silverberg} which already contains many of the main ideas. For $g$ a positive integer and $p$ a prime, define
\[
r(g,p) := \sum_{i=0}^\infty \left\lfloor \frac{2g}{(p-1)p^i} \right\rfloor.
\]
\begin{theorem}[Silverberg] \label{T:silverberg}
For $A$ an abelian variety of dimension $g$ over a number field $K$, the endomorphism field of $A$ is a finite Galois extension of $K$ of degree dividing $2 \times \prod_p p^{r(g,p)}$.
\end{theorem}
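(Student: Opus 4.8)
The plan is to reduce the statement to Minkowski's classical bound on the orders of finite subgroups of $\GL_n(\QQ)$, taken with $n = 2g$. First I would check that $L/K$ is a finite Galois extension: $\End(A_{\overline K})$ is a finitely generated $\ZZ$-algebra, so all of its elements are defined over a single finite Galois extension $K'/K$, the action of $\Gal(\overline K/K)$ on $\End(A_{\overline K})$ factors through $\Gal(K'/K)$, and $L$ is the fixed field of its kernel. Put $G := \Gal(L/K)$. By the very definition of the endomorphism field, $G$ acts \emph{faithfully} by ring automorphisms on $R := \End(A_{\overline K})$, hence on the semisimple $\QQ$-algebra $E := R \otimes_\ZZ \QQ = \End^0(A_{\overline K})$; so $G$ embeds into $\Aut_\QQ(E)$, and everything reduces to bounding the finite subgroups of $\Aut_\QQ(E)$ that can arise this way.

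The key point is that, although $\dim_\QQ E$ may be as large as $(2g)^2$ (for instance when $A_{\overline K}$ is isogenous to the $g$-th power of a CM elliptic curve), the relevant finite subgroups of $\Aut_\QQ(E)$ are controlled in dimension $2g$. To arrange this I would work with the faithful $E$-module $W := H_1(A_{\overline K},\QQ)$, which has $\QQ$-dimension $2g$, together with a polarization of $A$ defined over $K$ (one exists since $A$ is projective over $K$): the associated Rosati involution $\dagger$ makes $(E,\dagger)$ a $G$-equivariant structure and equips $W$ with a nondegenerate alternating form to which $E$ acts $\dagger$-adjointly. Since $A$ is defined over $K$, the isogeny decomposition $A_{\overline K}\sim\prod_i B_i^{n_i}$ is Galois-stable and $G$ preserves the multiplicities $n_i$; hence $W$ and its twist by any automorphism of $E$ in the image of $G$ are isomorphic $E$-modules, so each such automorphism is induced by conjugation by an element of $\GL_\QQ(W)$, unique up to the centralizer of $E$. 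Combining this with the block decomposition $E\cong\prod_i M_{n_i}(D_i)$ (so that $\Aut_\QQ(E)$, modulo a finite group of block-permutations and outer automorphisms of the $D_i$, is a product of $\QQ$-forms of projective linear groups obtained by Weil restriction from the centers of the $D_i$) and the Skolem--Noether theorem, one realizes $G$ inside the $\QQ$-points of the algebraic group $\underline{\Aut}(E,\dagger)$ acting on the $2g$-dimensional $W$; running this through Minkowski's estimate then costs exactly one extra factor of $2$ over the bound for $\GL_{2g}(\QQ)$, attributable to the similitude character of the alternating form (equivalently, the duality $A\leftrightarrow A^\vee$) and the passage from $\GL$ to its adjoint form --- and this is precisely the slack that the present paper removes.

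It then remains to invoke Minkowski's theorem, that every finite subgroup of $\GL_n(\QQ)$ has order dividing $\prod_p p^{m(n,p)}$ with $m(n,p) = \sum_{i\ge 0}\lfloor n/((p-1)p^i)\rfloor$; taking $n = 2g$ and noting $m(2g,p) = r(g,p)$ gives that $|G| = [L:K]$ divides $2\prod_p p^{r(g,p)}$, as desired.

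I expect the middle step to be the real obstacle: passing from the a priori action of $G$ on the possibly $(2g)^2$-dimensional algebra $\End^0(A_{\overline K})$ to a faithful $2g$-dimensional picture, and pinning the arithmetic defect down to a single factor of $2$. This requires the Albert classification of endomorphism algebras and a careful analysis of the Rosati involution; the division algebras carrying an involution of the second kind (type IV, with CM center) are the delicate case, and are presumably responsible both for the factor of $2$ and for the subtleties at $p=2$ and the Fermat primes that one must confront in order to make the bound sharp. The finiteness-and-descent step and the final appeal to Minkowski are, by contrast, essentially formal.
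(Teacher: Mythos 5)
Your route is genuinely different from the one the paper attributes to Silverberg. The paper's (i.e., Silverberg's) argument is: for each prime $\ell>2$ the action of $\Gal(L/K)$ on the $\ell$-torsion points realizes it as a subquotient of $\Sp(2g,\FF_\ell)$, and the bound is the GCD over $\ell$ of $\#\Sp(2g,\FF_\ell)$, computed by the Schur--Minkowski method. The advantage of that route is that it produces an honest \emph{linear} representation of $\Gal(L/K)$ in dimension $2g$ for free, so there is nothing to lift. Your route instead passes directly through the faithful action of $\Gal(L/K)$ on $E=\End^0(A_{\overline K})$, and then tries to cut the ambient dimension from $(2g)^2$ down to $2g$ by Skolem--Noether. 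That is a legitimately different idea (it is close in spirit to the algebraic-Sato--Tate-group argument the paper uses later for the sharp bound), but as written it has a real gap.

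The gap is the linearization. What Skolem--Noether gives you is a map from $\Gal(L/K)$ into $N_{\GL_\QQ(W)}(E)/Z_{\GL_\QQ(W)}(E)$ --- a \emph{projective} action on $W$, not a linear one. There is no natural Galois action on $W=H_1(A_{\overline K},\QQ)$ itself (the Galois action lives on $T_\ell A$, not on the rational Betti lattice), so you cannot simply quote Minkowski's theorem for finite subgroups of $\GL_{2g}(\QQ)$. To make your approach work you would have to bound finite subgroups of the $\QQ$-points of the quotient group $N/Z$ (or of $\underline\Aut(E,\dagger)$), and there is no a priori reason the cost of that, compared with $\GL_{2g}(\QQ)$, is a single factor of $2$: the centralizer $Z=\End_E(W)^\times$ is a torus of possibly large dimension, and the discrepancy between $\gcd_\ell\#(N/Z)(\FF_\ell)$ and $\gcd_\ell\#N(\FF_\ell)/\#Z(\FF_\ell)$ is exactly the kind of $2$-power slack your sketch does not control. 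You flag this as ``the real obstacle,'' and indeed it is; the sketch does not close it.

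One smaller point: the claim that the extra factor of $2$ ``is precisely the slack that the present paper removes'' misreads the paper. The improvement of Theorem~\ref{T:endomorphism field} over Theorem~\ref{T:silverberg} is not a single factor of $2$: at $p=2$ the exponent drops by $g+1$ (plus the leading $2$ disappears, for a total improvement of $2^{g+2}$), and at each Fermat prime the exponent drops by $1$. Those savings come from the CM analysis of the torus $G^\circ$ in Theorem~\ref{T:Sato-Tate group}, not from tightening a $\GL$-vs-adjoint discrepancy.

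For what it is worth, your final arithmetic check is fine: $m(2g,p)=\sum_{i\ge0}\lfloor 2g/((p-1)p^i)\rfloor=r(g,p)$, so Minkowski's bound for $\GL_{2g}(\QQ)$ is indeed $\prod_p p^{r(g,p)}$. The issue is entirely in justifying that $\Gal(L/K)$, up to a bounded factor, lands in $\GL_{2g}(\QQ)$ at all.
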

The proof \cite[Theorem~4.1]{silverberg} is elegantly simple: one verifies that for each prime $\ell > 2$, the Galois group of the endomorphism field extension is isomorphic 
(via its action on $\ell$-torsion points) to a subquotient of the group $\Sp(2g, \FF_\ell)$.
The bound is then obtained by taking the greatest common divisor of the orders of these finite groups. This echoes the method used by Minkowski to bound the order of a finite group of integer matrices (as exposed in \cite{guralnick}); we will return to this analogy a bit later.

From this proof, it is not clear whether one should expect the bound of Theorem~\ref{T:silverberg} to be sharp. However, a moment's thought shows that it is not sharp for $g=1$: the bound is $2^4 \times 3$ but the optimal bound is obviously 2, with the worst case being an elliptic curve whose CM field is not contained in $K$. More seriously, for $g=2$, the bound of Theorem~\ref{T:silverberg} is $2^8 \times 3^2 \times 5$, but (as will be explained shortly) the work of Fit\'e--Kedlaya--Rotger--Sutherland \cite{fkrs} implies that the optimal bound is $2^4 \times 3$. This raises the question of identifying the discrepancy between Theorem~\ref{T:silverberg} and the optimal bound, and this is achieved by our main result.

\begin{theorem} \label{T:endomorphism field}
For $A$ an abelian variety of dimension $g > 0$ over a number field $K$, the degree over $K$ of the endomorphism field of $A$ divides
\[
\prod_p p^{r'(g,p)}, \qquad
r'(g,p) := \begin{cases} r(g,p) - g - 1 & \mbox{if $p=2$;} \\
\max\{0, r(g,p)-1\} & \mbox{if $p$ is a Fermat prime;} \\
r(g,p) &\mbox{otherwise.}
\end{cases}
\]
Moreover, for fixed $g,p$ (but varying over all $K$), this value of $r'(g,p)$ is best possible.
\end{theorem}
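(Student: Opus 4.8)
The plan is to split the statement into a divisibility bound, which refines Silverberg's Theorem~\ref{T:silverberg}, and a matching construction for the ``best possible'' clause, and to reduce the first part entirely to group theory. The structural input that Silverberg's proof leaves unused is this: writing $L$ for the endomorphism field and $G = \Gal(L/K)$, the group $G$ acts \emph{faithfully} on the semisimple $\QQ$-algebra $B := \End^{0}(A_{\overline K})$ by algebra automorphisms (this is essentially the definition of $L$), compatibly with a positive Rosati involution; simultaneously $G$ is, via $\ell$-torsion, a subquotient of $\Sp(2g,\FF_\ell)$ for every prime $\ell>2$, and for each such $\ell$ the algebra $B\otimes\QQ_\ell$ embeds into $\End_{\QQ_\ell}(V_\ell)\cong M_{2g}(\QQ_\ell)$ as the commutant of the Galois image, so that the reduced dimension of $B$ and the degrees of its simple factors are controlled by $g$. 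The problem thus becomes: bound the $p$-part of a finite group acting faithfully on such an algebra. (One expects $[L:K]$ to coincide with the order of the component group of the Sato--Tate group, but the argument will not need that.)

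The engine is Minkowski's reduction method. Decompose $B$ into simple factors $M_{n_i}(D_i)$; the group $G$ permutes them, and its image in each $\Aut(M_{n_i}(D_i))=\PGL_{n_i}(D_i)\rtimes(\text{outer part})$, together with the permutation quotient, can be bounded piece by piece: the symmetric-group-type contributions by the classical $v_p(n!)$ estimate, the $\PGL_{n_i}(D_i)$ contributions by Minkowski-style bounds for finite subgroups of forms of $\PGL$ over number fields (governed by how large a cyclotomic $p$-extension fits in a field of the relevant degree). Adding up, one recovers the shape of $r(g,p)$; the improvement over the naive bound comes precisely from working with \emph{projective} rather than linear groups, since scalar and other central elements act trivially on $B$ and therefore cannot occur in $G$, which is what removes the wasted powers of $p$.

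The main obstacle is to make this trimming \emph{exactly sharp} in the boundary cases, which is also where the prime $2$ and the Fermat primes force themselves in. For $p$ odd and not Fermat, a good choice of auxiliary prime $\ell$ (a primitive root modulo a power of $p$) makes the reduction lossless and gives $r'(g,p)=r(g,p)$. For a Fermat prime $p$ one must show the loss is exactly $1$: the extremal configuration forces a single global $p$-th root of unity acting as a scalar, hence trivially on $B$, costing one power of $p$, and the subtle point is that for Fermat $p$ the ``worst'' auxiliary prime can be anomalous modulo $p$ (e.g.\ $\ell\equiv-1$), so the reduction must be carried out by hand rather than by a clean $\gcd$ over $\ell$. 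For $p=2$, where the loss is $g+1$, the argument is hardest: one cannot reduce modulo $\ell=2$ at all and must work through $\ell=3$ or modulo $4$, and the extremal $2$-groups are monomial/iterated-wreath groups roughly of the form $\mu_4\wr S_g$ modulo scalars; the task is to show that the $g+1$ missing powers of $2$ are accounted for \emph{entirely} by elements acting trivially on $B$ — an order-$4$ ``CM-type'' cyclic subgroup living on each commutative factor, plus one global scalar — and that no further loss is ever forced. Proving that these monomial groups really are the worst case comes down to classifying the maximal finite $2$-subgroups of $\Aut(B)$ as $B$ ranges over all algebras admissible as endomorphism algebras of $g$-dimensional abelian varieties, and this is the technical heart of the paper.

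Finally, for the sharpness clause I would exhibit, for each $g$ and $p$, an explicit example meeting the bound: twists of $E_0^{g}$ with $E_0$ a CM elliptic curve having extra automorphisms — CM by $\ZZ[i]$ when $p=2$, by $\ZZ[\zeta_3]$ when $p=3$ — and, for larger Fermat primes $p$, suitable powers of a lower-dimensional abelian variety with complex multiplication by $\QQ(\zeta_p)$; one chooses the base field $K$ so that the Galois action on $\End^{0}(A_{\overline K})\cong M_{\bullet}(\text{CM field})$ realizes the full monomial group modulo scalars, extended by the Galois group of the CM field, and checks that its $p$-part equals $p^{r'(g,p)}$. Once the extremal groups have been identified in the course of proving the upper bound, this last step should reduce to bookkeeping, plus a verification that the examples are consistent with the constraints that already appear in Silverberg's argument.
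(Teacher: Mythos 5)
Your proposal takes a genuinely different route from the paper: you bypass the Sato--Tate group entirely and work directly with the finite group $G=\Gal(L/K)$ acting faithfully on $B=\End^0(A_{\overline K})$, whereas the paper introduces the algebraic Sato--Tate group $\AST(A)$, a pointful subgroup scheme of $\Sp(2g)_\QQ$ with reductive connected part, and bounds the order of $\pi_0(\AST(A))$ (a strictly stronger statement, since $\pi_0(\AST(A))$ surjects onto $\Gal(L/K)$ but need not be isomorphic to it for $g\geq 4$). Your high-level insight is the same as the paper's, phrased dually: you observe that scalars act trivially on $B$ and so cannot appear in $G$; the paper instead subtracts the contribution of the connected torus $\AST(A)^\circ$ to the Minkowski count (Remark~\ref{R:sharp Minkowski}). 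The sharpness constructions (twists of powers of CM abelian varieties, $\mu_4\wr S_g$ for $p=2$, wreath products with $\QQ(\mu_p)$-CM pieces for Fermat primes) agree with the paper's.

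However, there are genuine gaps. First, neither constraint you invoke is sufficient alone, and you never say how to combine them. The Minkowski bound for $\underline{\Aut}(B)$ by itself is too weak: for $B=M_g(\QQ(i))$ one computes $r(\underline{\Aut}(B),2)=3g+v_2(g!)-2$, strictly larger than the target $r'(g,2)=2g+v_2(g!)-1$ for $g>1$; and the $\Sp(2g,\FF_\ell)$ constraint alone just reproduces Silverberg. The paper fuses the two by exploiting the algebraic structure: $\AST(A)$ lives inside $\Sp(2g)_\QQ$, its connected part centralizes $B$, and the Minkowski bound for $\pi_0$ is the bound for $\Sp(2g)_\QQ$ minus the guaranteed $p$-contribution of reductions of $\AST(A)^\circ$. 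Second, your explanation of the Fermat-prime loss --- ``the worst auxiliary prime can be anomalous modulo $p$, e.g.\ $\ell\equiv -1$'' --- is incorrect; the $\gcd$ over $\ell$ is already achieved at a primitive root, and no auxiliary prime is anomalous. The real mechanism is structural: for $p$ a Fermat prime, $\QQ(\mu_p)$ has no proper totally imaginary subfield, so the splitting field $F$ of the torus must be all of $\QQ(\mu_p)$ in the extremal case, and then the torus contributes an unavoidable factor of $p$ to every reduction, which is subtracted from the component-group bound (Corollary~\ref{C:imprimitive2} plus Remark~\ref{R:sharp Minkowski}). Third, the ``technical heart'' is not a classification of maximal finite $2$-subgroups of $\Aut(B)$ over all admissible $B$; the paper never attempts this. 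Instead it proves a short chain of elementary inequalities comparing Minkowski exponents for wreath/imprimitive configurations (Lemma~\ref{L:imprimitive1}, Corollaries~\ref{C:imprimitive2}--\ref{C:imprimitive3}, Lemma~\ref{L:imprimitive}) and threads the isotypic decomposition of the $\AST(A)$-module $\QQ^{2g}$ through them via the invariants $a,b,c,d$. Your bookkeeping for the $p=2$ loss (``an order-$4$ CM-type subgroup on each commutative factor plus one scalar'') also does not sum to $g+1$; the correct accounting is $r(g,2)-r'(g,2)=g+1 = \bigl(r(\Sp(2g)_\QQ,2)-r(\GL(g)_{\QQ(i)},2)\bigr)+1$, with the $+1$ coming from the always-divisible-by-$4$ reduction of the torus, as in the paper's final case analysis.
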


To give one more example, for $g=3$, the bound from Theorem~\ref{T:silverberg}
is $2^{11} \times 3^4 \times 5 \times 7$ whereas the optimal bound is
$2^6 \times 3^3 \times 7$. It is easy to see that the factor of 7 is necessary, e.g., by considering twists of the Klein quartic (see \cite[\S 4]{poonen-schaefer-stoll}).

As in \cite{fkrs}, our approach uses the relationship between the endomorphism field $L$ of $A$ and the \emph{Sato-Tate group} of $A$; the latter is a compact Lie group whose component group surjects canonically onto $\Gal(L/K)$, and the bound we ultimately prove is for the order of the component group (see Theorem~\ref{T:Sato-Tate group}). The Sato-Tate group is constructed as a compact form of a certain linear algebraic group over $\QQ$, the \emph{algebraic Sato-Tate group}, which allows us to bound the order of the component group using a variant of Minkowski's method. One key point is that the extremal cases occur for CM abelian varieties, for which the connected part of the algebraic Sato-Tate group is a torus which splits over a CM field; what distinguishes a Fermat prime $p$ in this context is that
$\QQ(\mu_p)$ contains no proper subfield which is CM. (For $p=2$, the same statement about $\QQ(\mu_4)$ plays an analogous role.)

To prove that Theorem~\ref{T:endomorphism field} is sharp, we use the relationship between twisting of abelian varieties and Sato-Tate groups; this reduces the problem to exhibiting abelian varieties admitting actions by large finite groups, which we achieve using CM abelian varieties and the same wreath product construction as in Minkowski's theorem. Note that the fields of definition of the resulting abelian varieties are controlled by class groups of abelian number fields, so we are unable to establish lower bounds over any fixed number field.

To conclude this introduction, we comment on the subtler problem of giving bounds by size, rather than divisibility. As described in
\cite[\S 6]{guralnick}, results of Weisfeiler and Feit can be combined with the classification of finite simple groups to show that for $n > 10$, the largest finite subgroups of $\GL(n, \QQ)$ have order $2^n n!$ (and are unique up to conjugacy). For abelian varieties of sufficiently large dimension $g$, one would expect that the largest possible endomorphism field extension, and the largest possible component group of the Sato-Tate group,
are obtained by twisting a power of an elliptic curve with $j$-invariant 0 using an automorphism group of order $6^g g!$ (note that these examples already occur over $\QQ$). For the endomorphism field, this expectation has been confirmed by work of R\'emond \cite{remond}; it is highly likely that a similar analysis applies to the component group (because the cases where the two differ tend not to have enough CM to trouble the bounds), but this would require some additional argument.

\section{Group schemes}

We start with some notation. Our notation choices are not entirely typical; they are made to help us distinguish between groups and group schemes.
\begin{defn} \label{D:group scheme}
For $G$ a group scheme (over some base), we write $G_X$ for the base extension of $G$ to the base scheme $X$, and $G(X)$ for the group of $X$-valued points of $G$.
 We say that $G$ is \emph{pointful over $X$} if $G(X)$ occupies a Zariski-dense subset of $G$.
By convention, all group schemes we consider will be smooth and linear; the standard linear groups will be considered as schemes over $\ZZ$, and we will write $\GL(n,X)$ instead of $\GL(n)(X)$ and so on.

For $G$ a group scheme over a connected base, let $G^\circ$ denote the identity connected component, and write $\pi_0(G) := G/G^\circ$ for the group of connected components, viewed as a finite group scheme over the same base. If $G$ is pointful, then so are both $G^\circ$
and $G/G^\circ$.

For $L/K$ a finite extension of fields and $G$ a group scheme over $\Spec L$, write
$\Res^L_K G$ for the Weil restriction of scalars of $G$ to $\Spec K$.
\end{defn}

\begin{example}
For $n$ a positive integer, the $n$-torsion subscheme of the multiplicative group over $\QQ$ is the group scheme $\Spec \QQ[x]/(x^n-1)$ which is obviously defined over $\QQ$. However, it is only pointful for $n=1,2$.
\end{example}

\begin{defn}
For $G$ a group scheme, let $\Out(G)$ be the group scheme of outer automorphisms of $G$,
i.e., the cokernel of the map $G \to \Aut(G)$ induced by conjugation. Note that for $G$ a group scheme over a field $k$ which is not algebraically closed, an element of $\Aut(G)(k)$ may map trivially to $\Out(G)(k)$ even though it does not come from the image of $G(k)$; that is, the scheme-theoretic notion of an outer automorphism disagrees with the group-theoretic notion because the latter is not stable under base extension.
\end{defn}

\section{Minkowski's method}
\label{sec:Minkowski}

We next formulate our version of Minkowski's reduction method.
We implicitly follow \cite{guralnick}, but see also \cite{serre} for another detailed treatment (both considering only finite groups).

Throughout \S\ref{sec:Minkowski}, let $G$ be a group scheme over a number field $K$.
\begin{defn} \label{D:Schur-Minkowski}
By convention $G$ is a scheme of finite type over $K$, so it can be extended to a group scheme over $\frako_K[1/N]$ for some positive integer $N$. In particular, it makes sense to form the base extension of $G$ to $\FF_{\frakq} := \frako_K/\frakq$ for all but finitely many prime ideals $\frakq$ of $\frako_K$.

Let $H$ be a finite pointful subquotient group scheme of $G$. By the previous paragraph, $H (K)$ is isomorphic to a subquotient of $G(\FF_{\frakq})$ for all but finitely many
$\frakq$; in particular, for each prime $p$, any $p$-Sylow subgroup of $H(K)$ is isomorphic to a subquotient of a $p$-Sylow subgroup of $G(\FF_{\frakq})$ for all but finitely many $\frakq$.

To translate this into a numerical bound, define the nonnegative integers $r(G,p)$ by the formula
\[
\prod_p p^{r(G,p)} = \sup_S \{\gcd_{\frakq \in S} \#G(\FF_{\frakq})\}
\]
where $S$ runs over all cofinite sets of prime ideals of $\frako_K$. Then the preceding discussion implies that $H$ has order dividing $\prod_p p^{r(G,p)}$; in particular, this bound applies to the component group of any pointful subgroup scheme of $G$.
\end{defn}

We collect some remarks related to this construction.
\begin{remark} \label{R:sharp Minkowski1}
Suppose that there exists a finite pointful subquotient group scheme $H$ of $G$ such that the $p$-part of $\#H$ equals the upper bound $p^{r(G,p)}$. Let $P$ be a $p$-Sylow subgroup of $H$; then for infinitely many $\frakq$, $P$ has the same cardinality as a $p$-Sylow subgroup of $G(\FF_{\frakq})$,
so by Sylow's theorem the two must be isomorphic. That is, the isomorphism type of $P$ is uniquely determined by $G$.
\end{remark}

\begin{remark} \label{R:sharp Minkowski}
Let $H$ be a pointful subgroup scheme of $G$. Then $\#\pi_0(H)$ divides $\prod_p p^{r(G,p)-\delta(G,p)}$ for
\[
\prod_p p^{\delta(G,p)} = \inf_S \{\gcd_{\frakq \in S} \#H^\circ(\FF_{\frakq})\}.
\]
\end{remark}

\begin{remark} \label{R:twisted forms}
In light of Wedderburn's theorem, for any number field $K$ and any twisted form $G$ of $\GL(n)_K$
one has $r(G, p) = r(\GL(n)_K, p)$.
\end{remark}

\begin{remark} \label{R:Minkowski Q}
For each prime $p$, the group $C_p \wr S_{\lfloor n/(p-1) \rfloor}$ embeds into
$\GL(n, \QQ)$, as then do its $p$-Sylow subgroups. The original theorem of Minkowski
asserts that the conjugates of the latter are the largest possible $p$-subgroups of $\GL(n, \QQ)$. However, if we compare this to the values
\begin{align*}
r(\GL(n)_\QQ, p) &= \left\lfloor \frac{n}{p-1} \right\rfloor + \left\lfloor \frac{n}{p(p-1)} \right\rfloor + \left\lfloor \frac{n}{p^2(p-1)} \right\rfloor + \cdots \qquad (p > 2) \\
r(\GL(n)_\QQ, 2) &= n + 2 \left\lfloor \frac{n}{2} \right\rfloor + \left\lfloor \frac{n}{4} \right\rfloor + \cdots,
\end{align*}
we see that the Minkowski bound is only sharp for $p>2$; for $p=2$, the Minkowski bound is too large by $\left\lfloor \frac{n}{2} \right\rfloor$, and one must supplement using some extra analysis involving quadratic forms over finite fields \cite[\S 5]{guralnick}.
For this reason, we do not know whether the example $C_p \wr S_{\lfloor n/(p-1) \rfloor}$
is optimal also for subquotients when $p=2$.
\end{remark}

\begin{remark} \label{R:Minkowski K}
For $K$ a number field, the Chebotarev density theorem implies that $r(\GL(n)_K, p)$ depends only on $K \cap \QQ(\mu_{p^\infty})$. For
\begin{align*}
m(K,p) &:= \min\{m \geq 1: K \cap \QQ(\mu_{p^m}) = K \cap \QQ(\mu_{p^\infty})\} \\
t(K,p) &:= [\QQ(\mu_{p^{m(K,p)}}): K \cap \QQ(\mu_{p^{m(K,p)}})],
\end{align*}
for $p>2$ we have
\begin{equation} \label{eq:minkowski k odd}
r(\GL(n)_K, p) = m(K,p) \left\lfloor \frac{n}{t(K,p)} \right\rfloor + \left\lfloor \frac{n}{pt(K,p)} \right\rfloor + \left\lfloor \frac{n}{p^2 t(K,p)} \right\rfloor + \cdots \qquad
\end{equation}
and this bound is again optimal (see \cite[\S 5.3]{guralnick} for a detailed discussion).

For $p = 2$, the situation depends crucially on whether $\QQ(\zeta_4) \subseteq K$. If so,
then $t(K,2) = 1$,
\begin{equation} \label{eq:minkowski k 2}
r(\GL(n)_K, 2) = m(K,2) n + \left\lfloor \frac{n}{2} \right\rfloor + \left\lfloor \frac{n}{4} \right\rfloor + \cdots,
\end{equation}
and this bound is optimal, achieved by $C_{2^{m(K,2)}} \wr S_n$. If not, the situation is more complicated; we limit ourselves to observing that for $K = \QQ(\sqrt{-2})$ we have $r(\GL(n)_K, 2) = r(\GL(n)_\QQ, 2)$,
while for $K = \QQ(\sqrt{2})$ we have $r(\GL(n)_K, 2)= r(\GL(n)_\QQ, 2) + \left\lfloor \frac{n}{2} \right\rfloor$.
\end{remark}

\begin{remark}
Although we do not need this for our main result, we note in passing the following corollary of Remark~\ref{R:Minkowski K} (which only affects the prime $p=2$): the optimal divisibility bound for the order of a finite subgroup of $\Sp(2g, \QQ)$ is $2^{r(\GL(g)_{\QQ(i)}, 2)} \prod_{p>2} p^{r(\GL(2g), p)}$. This comes down to the fact that any irreducible finite subgroup of $\Sp(2g, \QQ)$ is centralized by some totally imaginary number field \cite[Lemma~2.3]{kirschmer}.
\end{remark}

\begin{remark}
It is natural to use Minkowski's method as a starting point for bounding the order of finite subgroups of any reductive group over any field. This has been discussed extensively by Serre \cite{serre}.
\end{remark}

\section{Comparison of Minkowski bounds}

For our purposes, it will be important to compare the Minkowski bounds for various group/subgroup pairs. The key points will be to identify discrepancies for $p=2$, and to isolate cases for $p>2$ where the bound for the subgroup matches that of the full group.

\begin{remark} \label{R:double}
A trivial but useful observation along these lines is that for $n \geq 1$, $p>2$, $d > 1$,
\[
r(\GL(dn)_\QQ, p) > r(\GL(n)_\QQ, p) \qquad \mbox{whenever $r(\GL(dn)_\QQ, p)>0$}.
\]
A slightly less trivial observation is that for $n \geq 1$,
\[
r(\GL(n)_{\QQ(i)}, 2) > r(\GL(n)_\QQ, 2).
\]
\end{remark}

\begin{lemma} \label{L:imprimitive1}
Let $K$ be a number field of degree $d$ over $\QQ$. For each integer $n \geq 1$
and each odd prime $p$,
\[
r(\GL(dn)_\QQ, p) \geq r(\GL(n)_K, p) + r(S_d, p);
\]
moreover, if $n>1$ and $m(K,p) >1$, or if $d \geq p(p-1)$, then the inequality is strict.
\end{lemma}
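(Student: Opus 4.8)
The plan is to reduce the inequality to an elementary estimate for floor functions, after pinning down a divisibility constraint relating $d$ to the cyclotomic invariants of $K$ at $p$. Throughout write $m := m(K,p)$, $t := t(K,p)$ and $c := (p-1)/t$. By Remark~\ref{R:Minkowski Q} (applied with $n$ replaced by $dn$) and Legendre's formula, $r(\GL(dn)_\QQ, p) = \sum_{i \geq 0} \lfloor dn/((p-1)p^i)\rfloor$ and $r(S_d, p) = v_p(d!) = \sum_{i \geq 1}\lfloor d/p^i\rfloor$, while by \eqref{eq:minkowski k odd} we have $r(\GL(n)_K, p) = m\lfloor n/t\rfloor + \sum_{j \geq 1}\lfloor n/(p^j t)\rfloor$.

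First I would establish the arithmetic input: $t$ divides $p-1$, and $\delta := p^{m-1}(p-1)/t$ divides $d$. Here $\delta$ is exactly the degree over $\QQ$ of $K \cap \QQ(\mu_{p^m})$, which by minimality of $m$ is not contained in $\QQ(\mu_{p^{m-1}})$; since $\Gal(\QQ(\mu_{p^m})/\QQ) \cong (\ZZ/p^m)^\times$ is cyclic of order $p^{m-1}(p-1)$, its unique subfield of degree $\delta$ avoids the degree-$p^{m-2}(p-1)$ subfield $\QQ(\mu_{p^{m-1}})$ precisely when $p^{m-1} \mid \delta$, which forces $\delta = p^{m-1}(p-1)/t$ with $t = [\QQ(\mu_{p^m}):K\cap\QQ(\mu_{p^m})] \mid p-1$; and $\delta \mid d$ because $K \cap \QQ(\mu_{p^m}) \subseteq K$. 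Writing $d = \delta e$ with $e \geq 1$, substitution of $dn/(p-1) = p^{m-1}en/t$ and splitting the defining sum at $i = m-1$ yields the identity
\[
r(\GL(dn)_\QQ, p) = \sum_{l=0}^{m-1} \left\lfloor \frac{p^l e n}{t} \right\rfloor + \sum_{j \geq 1} \left\lfloor \frac{e n}{p^j t} \right\rfloor.
\]

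Next I would compare the two sides. The second sum already dominates $\sum_{j \geq 1}\lfloor n/(p^j t)\rfloor$ since $e \geq 1$, so it remains to bound the first sum $A := \sum_{l=0}^{m-1}\lfloor p^l e n/t\rfloor$ below by $m\lfloor n/t\rfloor + v_p(d!)$. If $n < t$, then $r(\GL(n)_K,p) = 0$ and the claim follows from $r(\GL(dn)_\QQ, p) \geq r(\GL(d)_\QQ, p) \geq v_p(d!)$, using $(p-1)p^i \leq p^{i+1}$. If $n \geq t$, then using $\lfloor p^l x\rfloor \geq p^l\lfloor x\rfloor$ and then $\lfloor en/t\rfloor \geq e\lfloor n/t\rfloor \geq e \geq 1$,
\[
A \;\geq\; \frac{p^m-1}{p-1}\left\lfloor\frac{en}{t}\right\rfloor \;=\; m\left\lfloor\frac{en}{t}\right\rfloor + \sum_{l=1}^{m-1}(p^l-1)\left\lfloor\frac{en}{t}\right\rfloor \;\geq\; m\left\lfloor\frac{n}{t}\right\rfloor + \Big(m(e-1) + e\sum_{l=1}^{m-1}(p^l-1)\Big).
\]
Since $v_p(d!) = ce\cdot\frac{p^{m-1}-1}{p-1} + v_p((ce)!)$ and $v_p((ce)!) < ce/(p-1) \leq e$, so $v_p((ce)!) \leq e-1$, the required inequality $m(e-1) + e\sum_{l=1}^{m-1}(p^l-1) \geq v_p(d!)$ reduces — via $m(e-1) + e\sum_{l=1}^{m-1}(p^l-1) = (e-m) + e\,\frac{p(p^{m-1}-1)}{p-1}$ and cancellation of the $\frac{p^{m-1}-1}{p-1}$-terms — to $(e-m) + e\,\frac{(p-c)(p^{m-1}-1)}{p-1} \geq e-1$; and this holds because $p-c \geq 1$, $e \geq 1$, and $\frac{p^{m-1}-1}{p-1} = 1 + p + \cdots + p^{m-2} \geq m-1$.

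For the strictness claims I would track which of the above estimates is loose. If $m > 1$ and $n > 1$, then either $t > 1$, so $p - c > 1$ and the last inequality is strict, or $t = 1$, so $\lfloor en/t\rfloor = en > e$ and $\lfloor n/t\rfloor = n > 1$, making the peeling steps strict. If instead $d \geq p(p-1)$, then $\lfloor d/(p-1)\rfloor \geq \lfloor d/p\rfloor + 1$, which makes $r(\GL(d)_\QQ, p) \geq v_p(d!)$ strict at $i = 0$ in the $n < t$ case; in the $n \geq t$ case the hypothesis $p^{m-1}e/t \geq p$ likewise forces one of the remaining steps (or the comparison of the second sums with $\sum_j\lfloor n/(p^jt)\rfloor$, which is strict once $\lfloor en/(pt)\rfloor > 0$) to be strict, which a short case split confirms. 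The main obstacle is precisely this chain of floor-function estimates: the inequality becomes an equality in extremal configurations (e.g.\ $m = 2$, $t = 1$, $e = 1$, $n = t$), so the crude bounds $v_p((ce)!) < ce/(p-1)$ and $\frac{p^m-1}{p-1} = \sum_{l=0}^{m-1}p^l$ must be arranged to lose nothing there, and isolating exactly which step witnesses strictness under each hypothesis requires careful bookkeeping.
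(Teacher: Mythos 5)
Your approach is genuinely different from the paper's. The paper argues by showing that the difference between the two sides is a nondecreasing function of $n$ (strictly increasing when $m>1$), then treats $n=1$ as a base case; you instead split the sum $r(\GL(dn)_\QQ,p)=\sum_i\lfloor dn/((p-1)p^i)\rfloor$ at $i=m-1$ using $d=\delta e$ with $\delta = p^{m-1}(p-1)/t$, and push everything through a closed chain of floor estimates. Your reduction is arguably cleaner on one point the paper elides: the paper writes ``from the equality $dt=p^{m-1}(p-1)$,'' which is only correct when $K\subseteq\QQ(\mu_{p^\infty})$, whereas your observation that $\delta\mid d$ handles general $K$ directly. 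Your verification of the main (non-strict) inequality, including the separate treatment of $n<t$, is sound.

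The strictness discussion, however, has a real gap, and you have in fact located a spot where the lemma as stated appears to fail. In the $n\geq t$ branch with $d\geq p(p-1)$ you assert that ``a short case split confirms'' one of the remaining steps is strict; but take $m=2$, $t=1$, $e=1$, $n=1$ (so $K=\QQ(\mu_{p^2})$, $d=p(p-1)$). Every inequality in your chain is then an equality: $A=1+p=\tfrac{p^2-1}{p-1}\lfloor en/t\rfloor$, the peeling steps lose nothing because $e=1$ and $\lfloor en/t\rfloor=1$, $v_p((ce)!)=v_p((p-1)!)=0=e-1$, the final inequality $e\tfrac{(p-c)(p^{m-1}-1)}{p-1}=1\geq m-1=1$ is tight, and both second sums vanish since $\lfloor en/(pt)\rfloor=0$. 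So your chain yields exactly equality, not strict inequality. A direct check confirms: for $p=3$, $K=\QQ(\mu_9)$, $d=6$, $n=1$, one has $r(\GL(6)_\QQ,3)=3+1=4$, $r(\GL(1)_{\QQ(\mu_9)},3)=2$, $r(S_6,3)=v_3(720)=2$, and $4=2+2$. This contradicts the lemma's ``if $d\geq p(p-1)$ then strict'' under the natural parsing. (The paper's own proof only avoids noticing this because the $n=1$ base case is stated as $r(\GL(d)_\QQ,p)\geq r(S_d,p)$, silently dropping the term $r(\GL(1)_K,p)=m$ that appears when $t=1$.) Separately, your strictness argument for $m>1$, $n>1$ does not address the subcase $1<n<t$, which falls under the $n<t$ branch where $r(\GL(n)_K,p)=0$ and a different (though easy: $r(\GL(dn)_\QQ,p)\geq r(\GL(2d)_\QQ,p)>r(\GL(d)_\QQ,p)\geq v_p(d!)$ once $d\geq p$) argument is required. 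You flag the strictness bookkeeping as ``the main obstacle,'' and correctly so; you should make explicit that, in the configuration you single out as extremal, strictness genuinely fails, rather than leaving it as something a case split would confirm.
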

\begin{proof}
There is nothing to check when $d=1$, so we may assume $d \geq 2$.
Put $m = m(K,p)$, $t = t(K,p)$; then the desired inequality is
\[
\left\lfloor \frac{dn}{p-1} \right\rfloor +
\left\lfloor \frac{dn}{p(p-1)} \right\rfloor + \cdots
\geq 
m \left\lfloor \frac{n}{t} \right\rfloor
+ \left\lfloor \frac{n}{pt} \right\rfloor + \cdots
+ \left\lfloor \frac{d}{p} \right\rfloor
+ \left\lfloor \frac{d}{p^2} \right\rfloor
+ \cdots.
\]
From the equality $dt = p^{m-1}(p-1)$, we see that $\frac{d}{p-1}$ equals $\frac{1}{t}$ times the integer $p^{m-1}$ which is no less than $m$ (and strictly greater than $m$ if $m>1$).
Consequently, by writing the difference between the two sides as
\[
\left\lfloor \frac{dn}{p-1} \right\rfloor
- 
m \left\lfloor \frac{n}{t} \right\rfloor
+ \left\lfloor \frac{dn}{p(p-1)} \right\rfloor 
- \left\lfloor \frac{n}{pt} \right\rfloor + \cdots
+ \left\lfloor \frac{d}{p} \right\rfloor
+ \left\lfloor \frac{d}{p^2} \right\rfloor
+ \cdots,
\]
we see that this difference does not decrease if we increase $n$ by 1 (and strictly increases if $m>1$). If $n=1$, then the desired inequality becomes $r(\GL(d)_\QQ, p) \geq r(S_d, p)$, which holds because $S_d$ embeds into $\GL(d, \QQ)$; this equality is strict whenever $d \geq p(p-1)$. This proves the claim.
\end{proof}

\begin{cor} \label{C:imprimitive2}
For $K$ a number field of degree $d$, for $p>2$ we have
\begin{equation} \label{eq:imprimitive2}
r(\GL(dn)_\QQ, p) \geq r(\Aut(K/\QQ) \ltimes \Res^K_{\QQ} \GL(n)_K, p).
\end{equation}
Moreover, if $K \not\subseteq \QQ(\mu_p)$, $K$ is not the degree-$p$ subextension of $\QQ(\mu_{p^2})$, 
and $r(\GL(dn)_\QQ, p) \neq 0$, then the equality is strict.
\end{cor}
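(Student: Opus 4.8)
The plan is to get \eqref{eq:imprimitive2} from an embedding of group schemes, and then to extract the strict inequality from a valuation count over Frobenius elements, very much in the spirit of the proof of Lemma~\ref{L:imprimitive1}.

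First I would prove \eqref{eq:imprimitive2}. The group scheme $H := \Aut(K/\QQ) \ltimes \Res^K_\QQ \GL(n)_K$ acts faithfully and $\QQ$-linearly on the $dn$-dimensional $\QQ$-vector space $K^{\oplus n}$: the normal subgroup $\Res^K_\QQ \GL(n)_K$ acts $K$-linearly, while $\Aut(K/\QQ)$ acts $\QQ$-semilinearly through its action on $K$. (Concretely $H$ is the normalizer of $\Res^K_\QQ \GL(n)_K$ inside $\GL(dn)_\QQ$, which is how the notation should be read.) This realizes $H$ as a closed subgroup scheme of $\GL(dn)_\QQ$, so $\#H(\FF_\ell)$ divides $\#\GL(dn,\FF_\ell)$ for all but finitely many primes $\ell$, and \eqref{eq:imprimitive2} follows at once from Definition~\ref{D:Schur-Minkowski}.

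For strictness the point is to control both sides at the primes realizing the infimum on the left. By the Chebotarev density theorem that infimum is attained, as the value of the $p$-adic valuation $v_p(\#\GL(dn,\FF_\ell))$, precisely on the positive-density set of $\ell$ with $\mathrm{ord}_p(\ell) = p-1$ and $v_p(\ell^{p-1}-1)=1$; for such $\ell$ one has $v_p(\#\GL(dn,\FF_\ell)) = r(\GL(dn)_\QQ,p) = \lfloor \tfrac{dn}{p-1}\rfloor + \sum_{k\ge1}\lfloor \tfrac{dn}{p^k(p-1)}\rfloor$. On the other hand, for $\ell$ unramified in $K$ one has $\#H(\FF_\ell) = c(\ell)\cdot\prod_{\frakl\mid\ell}\#\GL(n,\FF_\frakl)$, where $\frakl$ runs over primes of $\frako_K$ above $\ell$, $\FF_\frakl = \frako_K/\frakl$, and $c(\ell) = \#\pi_0(H)(\FF_\ell)$ depends only on the factorization type $(f_\frakl)_\frakl$ of $\ell$ in $K$ — equivalently on the cycle type of $\mathrm{Frob}_\ell$ acting on $\Gal(\tilde K/\QQ)/\Gal(\tilde K/K)$, where $\tilde K$ is the Galois closure of $K/\QQ$. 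Since $v_p(\#H(\FF_\ell)) \le v_p(\#\GL(dn,\FF_\ell))$ always, while $v_p(\#H(\FF_\ell)) \ge r(H,p)$ outside a finite set of $\ell$, an equality in \eqref{eq:imprimitive2} would force $v_p(\#H(\FF_\ell)) = v_p(\#\GL(dn,\FF_\ell))$ for all but finitely many $\ell$ of the above kind. So it suffices to produce infinitely many such $\ell$ with strict inequality there.

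This I would do by an explicit count. For $\ell$ with $\mathrm{ord}_p(\ell)=p-1$ and $v_p(\ell^{p-1}-1)=1$, lifting-the-exponent gives
$v_p(\#\GL(n,\FF_\frakl)) = \lfloor \tfrac{n g_\frakl}{p-1}\rfloor(1+v_p(f_\frakl)) + \sum_{k\ge1}\lfloor \tfrac{n g_\frakl}{p^k(p-1)}\rfloor$ with $g_\frakl := \gcd(p-1,f_\frakl)$, and $v_p(c(\ell))$ is the $p$-part of the order of a centralizer in $S_d$, namely $\sum_f v_p(f^{m_f}m_f!)$ with $m_f = \#\{\frakl : f_\frakl = f\}$. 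Summing over $\frakl$ and comparing with $r(\GL(dn)_\QQ,p)$, the tail terms ($k\ge1$) are bounded exactly as in Lemma~\ref{L:imprimitive1}, using $g_\frakl\le f_\frakl$, $\sum_\frakl f_\frakl = d$, and $\lfloor x\rfloor+\lfloor y\rfloor\le\lfloor x+y\rfloor$; what remains is to show that the leading term $\sum_\frakl\lfloor \tfrac{ng_\frakl}{p-1}\rfloor(1+v_p(f_\frakl))$ together with $v_p(c(\ell))$ falls strictly short of $\lfloor \tfrac{dn}{p-1}\rfloor$ plus the leading tail contribution. The hypotheses are exactly what make this work: $K\not\subseteq\QQ(\mu_p)$ forces that, for $\ell$ with $\mathrm{ord}_p(\ell)=p-1$, the image of $\mathrm{Frob}_\ell$ in $\Gal(\tilde K\cap\QQ(\mu_p)/\QQ)$ is pinned down in a way that leaves only a short list of admissible factorization types; the exclusion of $K$ being the degree-$p$ subextension of $\QQ(\mu_{p^2})$ removes the single borderline type (a length-$p$ cycle, i.e.\ $K$ cyclic of degree $p$ and inert at these $\ell$) in which $v_p(c(\ell))=1$ exactly compensates the deficit; and $r(\GL(dn)_\QQ,p)\ne0$ guarantees there is any $p$-content at stake.

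The main obstacle is the bookkeeping in this last step: one must enumerate the factorization types of $\ell$ in $K$ compatible with $\mathrm{ord}_p(\ell)=p-1$ — equivalently the cycle types of elements of $\Gal(\tilde K/\QQ)$ restricting, through $\Gal(\tilde K\cap\QQ(\mu_p)/\QQ)$, to a generator of $\Gal(\QQ(\mu_p)/\QQ)$ — and verify the strict valuation inequality in each case, isolating the two excluded configurations as exactly those in which it degenerates to an equality. A subsidiary point, needed before all of this, is the clean identification of the primes realizing $r(\GL(dn)_\QQ,p)$, which drops out of the same valuation formula with $(d,n)$ replaced by $(1,dn)$.
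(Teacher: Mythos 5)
Your proof of the non-strict inequality \eqref{eq:imprimitive2} is the same as the paper's (the embedding $\Aut(K/\QQ)\ltimes\Res^K_\QQ\GL(n)_K\hookrightarrow\GL(dn)_\QQ$). The strictness argument, however, has a fatal problem beyond the fact that its decisive step (the enumeration of factorization types and the case-by-case verification) is only described, not carried out. The quantity your point count controls is $v_p(\#H(\FF_\ell))$, and for unramified $\ell$ this equals $v_p(\#\Aut(K/\QQ))+\sum_{\mathfrak{l}\mid\ell}v_p(\#\GL(n,\FF_{\mathfrak{l}}))$: \emph{every} prime of $K$ above $\ell$ contributes a full factor $\#\GL(n,\FF_{\mathfrak{l}})$. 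Because of this accumulation, the strict gap you hope to detect at the special primes is simply not there. Concretely, take $p=7$, $K=\QQ(i)$, $n=6$, $d=2$; all hypotheses of the corollary hold and $r(\GL(12)_\QQ,7)=2$. For every $\ell$ with $\mathrm{ord}_7(\ell)=6$ and $v_7(\ell^6-1)=1$: if $\ell$ splits in $K$ then $v_7(\#H(\FF_\ell))=2\,v_7(\#\GL(6,\FF_\ell))=2$, while if $\ell$ is inert then $v_7(\#\GL(6,\FF_{\ell^2}))=v_7(\ell^6-1)+v_7(\ell^{12}-1)=2$; either way $v_7(\#H(\FF_\ell))=2=v_7(\#\GL(12,\FF_\ell))$, and indeed $v_7(\#H(\FF_\ell))\geq 2$ for all unramified $\ell$. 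So in the literal point-counting sense there are no special primes witnessing strictness, and your program cannot terminate.

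The content of the corollary that is actually used in Theorem~\ref{T:Sato-Tate group} is the bound on finite \emph{pointful subquotients} furnished by the extension structure, namely $r(\Aut(K/\QQ),p)+r(\GL(n)_K,p)\leq r(S_d,p)+r(\GL(n)_K,p)$, where $r(\GL(n)_K,p)$ is computed over primes of $K$ one residue field at a time: a finite subgroup of $\GL(n,K)$ reduces into $\GL(n,\frako_K/\frakq)$ for a \emph{single} $\frakq$, not into the product over all $\frakq\mid\ell$. That sum is genuinely smaller than your $v_p(\#H(\FF_\ell))$, which is why the paper never counts points of the Weil restriction over $\FF_\ell$. Instead it observes that the relevant bound is dominated by the right-hand side of Lemma~\ref{L:imprimitive1}, so equality in \eqref{eq:imprimitive2} forces equality there, and then a short case analysis on $K\cap\QQ(\mu_{p^\infty})$ (Remark~\ref{R:double}, the forced constraints $n=1$, $m(K,p)>1$, $d<p(p-1)$, and the resulting count $r(\Aut(K/\QQ),p)=1<2\leq r(\GL(d)_\QQ,p)$) produces the contradiction. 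If you want to retain a point-counting flavor, you must count $\GL(n)$ over individual residue fields of $K$ and $S_d$ separately and add the two valuations, rather than counting $\FF_\ell$-points of $H$.
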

\begin{proof}
The inequality \eqref{eq:imprimitive2}
holds because $\Aut(K/\QQ) \ltimes \Res^K_{\QQ} \GL(n)_K$ embeds into
$\GL(dn)_\QQ$. We thus only need to obtain a contradiction assuming that
$K \not\subseteq \QQ(\mu_p)$, $K$ is not the degree-$p$ subextension of $\QQ(\mu_{p^2})$, $r(\GL(dn)_\QQ, p) > 0$,
and equality holds in \eqref{eq:imprimitive2}.

Note that \eqref{eq:imprimitive2} also follows from Lemma~\ref{L:imprimitive1}, so equality must also hold in the latter.
By Remark~\ref{R:Minkowski K}, if $K' := K \cap \QQ(\mu_{p^\infty})$
has degree $d' \neq d$, then $r(\GL(n)_K,p) = r(\GL(n)_{K'}, p)$; we then get the strict inequality
by applying Lemma~\ref{L:imprimitive1} to the field $K'$ and invoking Remark~\ref{R:double}
(using the condition that $r(\GL(dn)_\QQ, p) > 0$).
We must therefore have $K = K'$ and hence $K \subseteq \QQ(\mu_{p^\infty})$;
since we assumed $K \not\subseteq \QQ(\mu_p)$, this implies that $m(K,p) > 1$.
To have equality in Lemma~\ref{L:imprimitive1}, we must then have $n=1$ and $d < p(p-1)$.

Since $m(K,p) > 1$, $K$ must contain the degree-$p$ subextension of $\QQ(\mu_{p^2})$, necessarily strictly by hypothesis; hence $d/p$ is an integer strictly greater than 1.
By the bound on $d$, we cannot then have $\QQ(\mu_p) \subseteq K$,
so $r(\GL(1)_K, p) = 0$.
Meanwhile, $K$ is an abelian extension of $\QQ$, so $r(\Aut(K/\QQ), p) = 1$.
However, since $d \geq 2p$, $r(\GL(d)_\QQ, p) \geq 2$, yielding the desired contradiction.
\end{proof}

\begin{cor} \label{C:imprimitive3}
For any integers $n,d > 1$, for each odd prime $p$ for which $r(\GL(nd)_\QQ, p) > 0$, we have
\[
r(\GL(dn)_\QQ, p) > r(\GL(n)_\QQ, p) + r(S_d, p).
\]
\end{cor}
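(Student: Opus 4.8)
The plan is to realize large $p$-subgroups by exhibiting explicit subgroup schemes of $\GL(nd)_\QQ$ and reading off their Minkowski bounds via Definition~\ref{D:Schur-Minkowski}, splitting into cases according to whether $r(\GL(n)_\QQ, p)$ vanishes. Two facts will be used repeatedly: by the formula in Remark~\ref{R:Minkowski Q} (valid since $p$ is odd), $r(\GL(m)_\QQ, p) = 0$ if and only if $m \leq p-2$; and for the constant group scheme $S_e$ one has $r(S_e, p) = v_p(e!)$.

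If $r(\GL(n)_\QQ, p) \geq 1$, I would use the imprimitive embedding $\GL(n)_\QQ \wr S_d \hookrightarrow \GL(nd)_\QQ$ in which $S_d$ permutes $d$ blocks each isomorphic to $\QQ^n$. This group scheme has $\#\GL(n,\FF_\frakq)^d \cdot d!$ points over $\FF_\frakq$, so Definition~\ref{D:Schur-Minkowski} gives $r(\GL(nd)_\QQ, p) \geq d\,r(\GL(n)_\QQ, p) + r(S_d, p)$; since $d \geq 2$ and $r(\GL(n)_\QQ, p) \geq 1$, this strictly exceeds $r(\GL(n)_\QQ, p) + r(S_d, p)$. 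Lemma~\ref{L:imprimitive1} applied to a generic degree-$d$ field only yields $r(\GL(nd)_\QQ,p) \geq r(\GL(n)_\QQ,p) + r(S_d,p)$; the gain of a factor $d$ here comes from using the split \'etale algebra $\QQ^d$ in place of a field. Note that in this case the hypothesis $r(\GL(nd)_\QQ, p) > 0$ is automatic.

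If $r(\GL(n)_\QQ, p) = 0$, equivalently $n \leq p-2$ (so in particular $v_p(n!) = 0$), the target inequality reduces to $r(\GL(nd)_\QQ, p) > r(S_d, p)$. When $d < p$ we have $r(S_d, p) = v_p(d!) = 0$, and this is exactly the hypothesis. When $d \geq p$, I would instead use the permutation embedding $S_{nd} \hookrightarrow \GL(nd)_\QQ$ together with the divisibility $(d!)^n\, n! \mid (nd)!$ --- which holds because $(nd)!/((d!)^n n!)$ counts partitions of an $nd$-element set into $n$ unordered blocks of size $d$ --- to get
\[
r(\GL(nd)_\QQ, p) \geq r(S_{nd}, p) = v_p((nd)!) \geq n\,v_p(d!) + v_p(n!) = n\,r(S_d, p),
\]
which exceeds $r(S_d, p)$ since $n \geq 2$ and $r(S_d, p) = v_p(d!) \geq 1$ when $d \geq p$.

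I expect the main obstacle to be exactly the case $r(\GL(n)_\QQ, p) = 0$ with $d$ small: this is the regime not covered by the strictness clause $d \geq p(p-1)$ of Lemma~\ref{L:imprimitive1}, which is why the corollary does not follow formally from that lemma. The resolution hinges on observing that this case bifurcates cleanly into $d < p$ --- where $r(S_d, p)$ simply vanishes and the hypothesis does all the work --- and $d \geq p$ --- where one abandons the $\GL(n)$-wreath product in favour of the full symmetric group $S_{nd}$ and a multinomial divisibility. Everything else is routine: monotonicity of $r(-,p)$ under closed embeddings of group schemes, and its behaviour on direct products and on $G \wr S_d$, are all immediate from Definition~\ref{D:Schur-Minkowski}.
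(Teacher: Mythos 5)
Your proof is correct, but it takes a genuinely different route from the paper. The paper reduces to the case of even $d$ and then invokes Lemma~\ref{L:imprimitive1} for a number field $K$ of degree $d$ chosen to contain the quadratic subfield of $\QQ(\mu_p)$ (so that $r(\GL(n)_K,p) > r(\GL(n)_\QQ,p)$ unless both vanish), finishing with Remark~\ref{R:double} to exclude the residual equality $r(\GL(dn)_\QQ,p) = r(S_d,p)$. You instead bypass Lemma~\ref{L:imprimitive1} entirely: the block-monomial embedding $\GL(n)_\QQ \wr S_d \hookrightarrow \GL(nd)_\QQ$ gives the stronger intermediate bound $r(\GL(nd)_\QQ,p) \geq d\,r(\GL(n)_\QQ,p) + r(S_d,p)$ (your point that the split \'etale algebra $\QQ^d$ beats a degree-$d$ field here is exactly right), which kills the case $r(\GL(n)_\QQ,p)\geq 1$ outright; and in the degenerate case $n\leq p-2$ your dichotomy on $d$ versus $p$, with the multinomial divisibility $(d!)^n\,n! \mid (nd)!$ handling $d\geq p$, is clean and correct. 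Your argument is more elementary and self-contained, and it sidesteps the paper's parity reduction (replacing $d$ by $d-1$ for odd $d$), which is actually a little delicate since $r(S_d,p)$ can exceed $r(S_{d-1},p)$ when $p\mid d$; the paper's route has the advantage of exercising Lemma~\ref{L:imprimitive1}, which is needed anyway in the proof of Theorem~\ref{T:Sato-Tate group}, where the field $F$ is imposed by the abelian variety rather than chosen freely.
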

\begin{proof}
We first reduce to the case where $d$ is even.
Namely, we may do this by replacing $d$ with $2 \left\lfloor \frac{d}{2} \right\rfloor$
except if $d$ is odd, $r(\GL(dn)_\QQ, p) > 0$, and $r(\GL((d-1)n)_\QQ, p) = 0$.
This implies $(d-1) n < p-1 \leq dn$, which implies on one hand that $n < p-1$ and $r(\GL(n)_\QQ, p) = 0$, and on the other hand that $d-1 < p-1$ and so $r(S_d, p) = 0$;
this yields the claimed inequality.

For any number field $K$ of even degree $d$,
by Lemma~\ref{L:imprimitive1} we have
\[
r(\GL(dn)_\QQ, p) \geq r(\GL(n)_K, p) + r(S_d,p) \geq r(\GL(n)_\QQ, p) + r(S_d, p),
\]
so it suffices to confirm that both equalities cannot hold simultaneously.
 Since we are free to choose $K$, we take it to contain the quadratic subfield of $\QQ(\mu_p)$; then by Remark~\ref{R:Minkowski K}, we have $r(\GL(n)_K, p) > r(\GL(n)_\QQ,p)$ unless both quantities equal zero. It thus suffices to rule out the equality
$r(\GL(dn)_\QQ, p) = r(S_d, p)$; since $r(\GL(d)_\QQ, p) \geq r(S_d, p)$, this follows from Remark~\ref{R:double}.
\end{proof}

For $p=2$, we have the following analogue of Corollary~\ref{C:imprimitive2}.
\begin{remark} \label{R:base extension}
For $K/F$ an extension of number fields of degree $d$, we obviously have
\[
r(\GL(dn)_{F}, 2) \geq r(\Aut(K/F) \ltimes \Res^K_{F} \GL(n)_K, 2).
\]
In case $F = \QQ(i)$, one can show using Remark~\ref{R:sharp Minkowski1} that equality holds only when $d=1$; however, we will not need this.
\end{remark}

For $p=2$, we have the following analogue of Corollary~\ref{C:imprimitive3}.

\begin{lemma} \label{L:imprimitive}
For any integers $n,d > 1$ such that $g = dn/2$ is an integer,
\[
r(\GL(g)_{\QQ(i)}, 2) \geq r(\GL(n)_\QQ, 2) + r(S_d, 2)
\]
with equality only for $(n,d) = (2,2)$.
\end{lemma}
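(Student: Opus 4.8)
The plan is to reduce the inequality to elementary bookkeeping with $2$-adic valuations of factorials. Write $v_2$ for the $2$-adic valuation, so that $v_2(m!) = \lfloor m/2 \rfloor + \lfloor m/4 \rfloor + \cdots$. Since $\QQ(i) = \QQ(\zeta_4)$ we have $m(\QQ(i),2) = 2$ and $t(\QQ(i),2) = 1$, so \eqref{eq:minkowski k 2} gives $r(\GL(g)_{\QQ(i)}, 2) = 2g + v_2(g!)$; rewriting the tail $2\lfloor n/2\rfloor + \lfloor n/4\rfloor + \cdots$ of the formula in Remark~\ref{R:Minkowski Q} as $\lfloor n/2 \rfloor + v_2(n!)$ gives $r(\GL(n)_\QQ, 2) = n + \lfloor n/2 \rfloor + v_2(n!)$; and of course $r(S_d,2) = v_2(d!)$. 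Thus the claim is equivalent to
\[
2g + v_2(g!) \geq n + \lfloor n/2 \rfloor + v_2(n!) + v_2(d!), \qquad g = dn/2,
\]
with equality only for $(n,d) = (2,2)$. I would first simplify the left-hand side: applying the identity $v_2((2m)!) = m + v_2(m!)$ with $m = g$ (so $2m = dn$) shows $2g + v_2(g!) = \frac{dn}{2} + v_2((dn)!)$, so it suffices to prove $\frac{dn}{2} + v_2((dn)!) \geq n + \lfloor n/2 \rfloor + v_2(n!) + v_2(d!)$.

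Now split on $d$. For $d = 2$, the identity $v_2((2n)!) = n + v_2(n!)$ together with $v_2(2!) = 1$ reduces the inequality to $n \geq \lfloor n/2 \rfloor + 1$, which holds for all $n \geq 1$ and is an equality exactly when $n = 2$; this is the exceptional case. For $d \geq 3$, the integrality of the multinomial coefficient $(dn)!/((n!)^d d!)$ — it counts the partitions of a $dn$-element set into $d$ unordered blocks of size $n$ — gives $v_2((dn)!) \geq d\, v_2(n!) + v_2(d!)$, so it is enough to check
\[
\frac{dn}{2} - n - \lfloor n/2 \rfloor + (d-1)\, v_2(n!) > 0.
\]
The first three terms are nonnegative because $\frac{dn}{2} - n = \frac{(d-2)n}{2} \geq \frac{n}{2} \geq \lfloor n/2 \rfloor$ when $d \geq 3$, while $(d-1)\, v_2(n!) \geq 2$ since $d \geq 3$ and $v_2(n!) \geq 1$ for $n \geq 2$. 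Hence the inequality is strict for every $d \geq 3$, and combined with the $d = 2$ analysis the only equality case is $(n,d) = (2,2)$.

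There is no genuine obstacle here: the argument is routine once the explicit Minkowski exponents are written down, and in contrast to the odd-prime Corollary~\ref{C:imprimitive3} there is no need to introduce an auxiliary number field. The one point requiring attention is the value $r(\GL(g)_{\QQ(i)}, 2) = 2g + v_2(g!)$, in particular the coefficient $m(\QQ(i),2) = 2$ rather than $1$ — this is exactly the surplus over $\QQ$ recorded in Remark~\ref{R:double}, and it is what makes the lemma true at all. One should also note that the constraint $g = dn/2 \in \ZZ$ creates no difficulty: for $d = 2$ it is vacuous, and for $d \geq 3$ the decisive estimate $\frac{dn}{2} - n \geq \lfloor n/2 \rfloor$ depends only on $d$, not on the parity of $n$.
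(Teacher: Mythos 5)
Your proof is correct, and it takes a genuinely different route from the paper. The paper handles $d=2$ and $n=2$ directly and then treats $n,d\geq 3$ by a monotonicity argument: it observes that the difference between the two sides increases when $d$ is incremented by $2$, deduces the even-$d$ cases from the base cases, and then disposes of odd $d$ by the chain $r(\GL(g)_{\QQ(i)},2)\geq r(\GL(g-n/2)_{\QQ(i)},2)\geq r(\GL(n)_\QQ,2)+r(S_{d-1},2)=r(\GL(n)_\QQ,2)+r(S_d,2)$. You instead recast the left side as $\tfrac{dn}{2}+v_2((dn)!)$ via $v_2((2m)!)=m+v_2(m!)$, and then invoke the integrality of the multinomial coefficient $(dn)!/\bigl((n!)^d\,d!\bigr)$ to get $v_2((dn)!)\geq d\,v_2(n!)+v_2(d!)$, which reduces the whole matter (for $d\geq 3$) to the transparent strict estimate $\tfrac{(d-2)n}{2}-\lfloor n/2\rfloor+(d-1)v_2(n!)>0$. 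This buys a uniform treatment of $d\geq 3$ without any parity case split, and it makes the equality analysis immediate (only $d=2$ can give equality, forcing $n=2$); the paper's argument is slightly longer but stays entirely within floor-function bookkeeping and parallels the structure of the odd-prime Corollary~\ref{C:imprimitive3} more closely. Both arguments are sound; yours is the tidier one.
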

\begin{proof} 
We are claiming that
\[
dn + \left\lfloor \frac{dn}{4} \right\rfloor + \left\lfloor \frac{dn}{8} \right\rfloor + \cdots \geq n + 2 \left\lfloor \frac{n}{2} \right\rfloor + \left\lfloor \frac{n}{4} \right\rfloor + \cdots
+ \left\lfloor \frac{d}{2} \right\rfloor + \left\lfloor \frac{d}{4} \right\rfloor + \cdots
\]
with equality only for $(n,d) = (2,2)$. For $d=2$, this inequality becomes
$n \geq \left\lfloor \frac{n}{2} \right\rfloor + 1$; for $n=2$, it becomes
$2d \geq 4$. It thus remains to check the cases where $n, d \geq 3$.

We may write the difference between the two sides as
\[
(d-1)n + \left(\left\lfloor \frac{dn}{4} \right\rfloor - \left\lfloor \frac{d}{2} \right\rfloor \right)
+ \left(\left\lfloor \frac{dn}{8} \right\rfloor - \left\lfloor \frac{d}{4} \right\rfloor \right) + \cdots
- 2 \left\lfloor \frac{n}{2} \right\rfloor - \left\lfloor \frac{n}{4} \right\rfloor - \cdots;
\]
in particular, for $n \geq 2$ fixed, this difference increases if we increase $d$ by 2.
Using the previous paragraph, we deduce the claim when $d$ is even.
For $d$ odd, we may argue that
\[
r(\GL(g)_{\QQ(i)},2) \geq r(\GL(g - n/2)_{\QQ(i)}, 2)
\geq r(\GL(n)_\QQ, 2) + r(S_{d-1}, 2)
= r(\GL(n)_\QQ, 2) + r(S_d, 2)
\]
with strict inequality if $n > 2$.
\end{proof}

\section{Abelian varieties and Sato-Tate groups}

We now specialize to the cases of interest for abelian varieties.
\begin{defn}
For $A$ an abelian variety over a number field $K$, let $\AST(A)$ denote the \emph{algebraic Sato-Tate group} of $A$ in the sense of \cite[Definition~9.5]{bk}. The key properties that we need are the following.
\begin{itemize}
\item
The group scheme $\AST(A)$ is a pointful subgroup scheme of $\Sp(2g)_\QQ$ whose connected part is reductive. The connected part is closely related to the \emph{Mumford-Tate group} of $A$.
\item
There exists a torus $T \subset \AST(A)^\circ_\CC$ which acts on $\CC^{2g}$ with weights $1, -1$ each with multiplicity $g$. In particular, the fixed space of $\AST(A)$ is the zero subspace.
\item
The component group $\pi_0(A)$ surjects onto $\Gal(L/K)$ for $L$ the endomorphism field of $A$; this map is a bijection whenever the Mumford-Tate group is completely explained by endomorphisms (which holds in all cases when $g \leq 3$ but can fail for $g \geq 4$, as originally shown by Mumford).
Moreover, for $K'$ a finite extension of $K$, $\AST(A_{K'})$ is the inverse image of
$\Gal(LK'/K') \subseteq \Gal(L/K)$ in $\AST(A)$.
\item
Any decomposition of $\QQ^{2g}$ into indecomposable $\AST(A)$-modules corresponds to a product-up-to-isogeny decomposition of $A$.
\item
The group $\AST(A)$ is a torus if and only if $A$ is isogenous to a product of abelian varieties with CM defined over $K$.
\end{itemize}
\end{defn}

\begin{example}
Put
\[
M_1 := \QQ(i), M_2 := \QQ(\sqrt{-2}), M_3 := \QQ(\sqrt{-3}),
M_4 := \QQ(\sqrt{-6})
\]
and let $K$ be the compositum of these four fields. Let $A$ be the product of four elliptic curves $E_1,\dots,E_4$ with CM by $M_1,\dots,M_4$, respectively. Then $\AST(A)$ is a torus of dimension 3.
\end{example}

\begin{remark} \label{R:ST vs AST}
The \emph{Sato-Tate group} of $A$, as studied for abelian surfaces in \cite{fkrs},
is a maximal compact subgroup of $\AST(A, \CC)$; it therefore has the same component group as $\AST(A)$.
On one hand, this means that the argument using algebraic Sato-Tate groups in the proof of Theorem~\ref{T:Sato-Tate group} directly applies also to the component groups of Sato-Tate groups; on the other hand, the conclusion of Theorem~\ref{T:Sato-Tate group} in the case $g=2$ is a consequence of \cite[Theorem~4.3]{fkrs}, which will save a bit of case analysis.
\end{remark}

We prove the upper bound assertion of Theorem~\ref{T:endomorphism field} by establishing the following result.
\begin{theorem} \label{T:Sato-Tate group}
For $A$ an abelian variety of dimension $g$ over a number field $K$, the component group of the algebraic Sato-Tate group of $A$ (or equivalently, the Sato-Tate group of $A$)
has order dividing $\prod_p p^{r'(g,p)}$.
\end{theorem}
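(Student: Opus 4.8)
The plan is to run the reduction method of \S\ref{sec:Minkowski} on $G := \AST(A)$, regarded as a pointful subgroup scheme of $\Sp(2g)_\QQ$ with reductive identity component, and then to sharpen the resulting estimate at $p = 2$ and at Fermat primes by exploiting the structure of the normalizer of $G^\circ$. Writing $q = \#\FF_\frakq$, the identity $\#\Sp(2g,\FF_\frakq) = q^{g^2}\prod_{i=1}^g(q^{2i}-1)$ together with the Chebotarev density theorem gives $r(\Sp(2g)_\QQ, p) = r(g,p)$ in the sense of Definition~\ref{D:Schur-Minkowski}, so that definition already yields $\#\pi_0(G) \mid \prod_p p^{r(g,p)}$, which is essentially Theorem~\ref{T:silverberg}. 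For an odd prime $p$ that is not a Fermat prime we have $r'(g,p) = r(g,p)$ and nothing more is needed; so the task is to remove $g+1$ from the exponent of $2$ and $1$ from the exponent of each Fermat prime (when that exponent is positive). By Remark~\ref{R:ST vs AST} we may quote \cite{fkrs} for $g \le 2$ and induct on $g$.

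Since $G$ normalizes $G^\circ$, the group $\pi_0(G)$ is an extension of a subgroup of $\Out(G^\circ)$ by a subquotient of the centralizer $Z_{\Sp(2g)_\QQ}(G^\circ)$, and both pieces are governed by $G^\circ$. First I would dispose of the case that $\QQ^{2g}$ decomposes, as a $G$-module, into pieces coming from more than one simple isogeny factor: the corresponding decomposition $A \sim \prod_j B_j^{n_j}$ (with the $B_j$ pairwise non-isogenous simple and at least two distinct factors occurring) gives $\pi_0(G) \hookrightarrow \prod_j \pi_0(\AST(B_j^{n_j}))$ with each $B_j^{n_j}$ of dimension $< g$, so by induction together with the superadditivity of $r$ --- hence of $r'$, strictly so at $p = 2$ and harmlessly at Fermat primes --- the desired bound follows. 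This reduces us to the isotypic case $A \sim B^n$ with $B$ simple. If $B$ does not have CM, then $G^\circ$, acting diagonally, is not a torus; passing to the multiplicity spaces of the $G^\circ$-constituents confines $\pi_0(G)$, up to the small contributions of $\Out(G^\circ)$ and $\pi_0(Z_{\Sp(2g)_\QQ}(G^\circ))$, to a finite subgroup of $\GL(m,\QQ)$, $\Sp(m,\QQ)$ or $O(m,\QQ)$ with $m \le g$, whose $p$-part falls comfortably below $r'(g,p)$ by Remark~\ref{R:double}. So the non-CM cases are routine, and the substance of the theorem lies in the CM case.

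Thus suppose $A$ has CM, so $G^\circ = T$ is a torus, $\End(A_{\overline K}) \otimes \QQ$ is a product of matrix algebras over CM fields, and $T$ splits over a CM field $M$. Here $Z_{\Sp(2g)_\QQ}(T)$ is a unitary-type form of a general linear group over $M$; consequently $\pi_0(G) \hookrightarrow N_{\Sp(2g)_\QQ}(T)/T$ is confined (via the Galois action on $\End(A_{\overline K})\otimes\QQ$) to a group of the shape $\PGL(n)_M \rtimes \Aut(M/\QQ)$, which embeds into a quotient of $\Aut(M/\QQ) \ltimes \Res^M_\QQ \GL(n)_M$ and hence into $\GL(2g)_\QQ$. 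For odd $p$, Corollary~\ref{C:imprimitive2} gives $r(\GL(2g)_\QQ,p) \ge r\bigl(\Aut(M/\QQ) \ltimes \Res^M_\QQ \GL(n)_M,\, p\bigr)$, with \emph{strict} inequality unless $M \subseteq \QQ(\mu_p)$ or $M$ is the degree-$p$ subextension of $\QQ(\mu_{p^2})$; when $p$ is a Fermat prime the latter field is totally real (hence not of the required CM type) and $\QQ(\mu_p)$ has no proper CM subfield, so the only borderline case is $M = \QQ(\mu_p)$ --- and there the saving comes instead from passing to the projective quotient, which removes a power of $p$ precisely because $\mu_p \subseteq M$. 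Either way $v_p(\#\pi_0(G)) \le r(g,p) - 1 = r'(g,p)$. For $p = 2$ one argues in the same pattern but with Lemma~\ref{L:imprimitive} and Remark~\ref{R:base extension} replacing Corollary~\ref{C:imprimitive2} and $\QQ(i)$ playing the role of $\QQ(\mu_p)$, the combination of the symplectic constraint and the projective quotient supplying the full gain of $g+1$. The main obstacle throughout is the prime $2$: pinning the gain down to exactly $g+1$, and verifying it in the toral case, depends on the delicate inequalities of \S4 and on carefully tracking the interaction between the symplectic polarization, the projective nature of the automorphism group, and the $2$-power cyclotomic part of $M$; one must also check that the Fermat-prime saving survives the truncation $\max\{0,\, r(g,p)-1\}$ when $r(g,p)$ is $0$ or $1$.
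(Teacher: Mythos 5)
Your overall architecture matches the paper's: Minkowski's method applied to $\AST(A)$, reduction to the case where the identity component is a torus split by a CM field, the comparison inequalities of \S4, and the observation that the (projective quotient by the) torus recoups one factor of $p$ when $F=\QQ(\mu_p)$ (this is Remark~\ref{R:sharp Minkowski} in the paper's language). The odd non-Fermat primes and the Fermat-prime endgame are essentially right. But there are two genuine gaps.

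First, your reduction step is wrong as stated. You claim that when $V$ decomposes into pieces coming from several simple isogeny factors, $\pi_0(G)$ embeds into $\prod_j \pi_0(\AST(B_j^{n_j}))$ and induction applies. This fails when the $\overline{K}$-simple factors are permuted by Galois (e.g.\ $A=\Res^{K'}_K E$ with $E$ not isogenous to its conjugate): the individual $B_j^{n_j}$ are not defined over $K$, and $\pi_0(G)$ contains the permutation action on the blocks, which the product of the factors' component groups does not see. This is exactly the ``induced'' case in which $A$ is indecomposable but $G^\circ$ acts non-isotypically, and it is where the paper needs Corollary~\ref{C:imprimitive3} and Lemma~\ref{L:imprimitive} --- including the borderline case $(n,d)=(2,2)$ of the latter, which must be excluded by the reduction to $g\geq 3$. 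Your proposal never confronts this wreath-product contribution.

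Second, and more seriously, the $p=2$ toral case --- which carries the entire content of the drop from $r(g,2)$ to $r(g,2)-g-1$ --- is not proved; you defer it to ``the same pattern'' and ``carefully tracking the interaction.'' It is not the same pattern: the argument splits into four cases according to $F\cap\QQ(\mu_8)$ being $\QQ$, containing $\QQ(i)$, equal to $\QQ(\sqrt{-2})$, or equal to $\QQ(\sqrt{2})$, because $r(\GL(n)_F,2)$ genuinely depends on this intersection (Remark~\ref{R:Minkowski K}). The $\QQ(\sqrt{2})$ case does not follow from the unitary/projective mechanism you describe: there one incurs an extra $\left\lfloor \frac{bc}{2}\right\rfloor$ in the Minkowski bound and must recover it using $a=[F:\QQ]\geq 4$ together with Lemma~\ref{L:imprimitive}, a computation with no analogue at odd $p$. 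Without identifying and closing these cases, the exponent $r(g,2)-g-1$ is not established.
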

\begin{proof}
Put $G = \AST(A)$. It suffices to check the claimed divisibility for the $p$-part of $\#\pi_0(G)$; this is immediate from Remark~\ref{R:Minkowski Q} (applied with $n = 2g$) unless $p=2$ or $p$ is a Fermat prime.
In light of Remark~\ref{R:ST vs AST}, we may assume further that $g \geq 3$.

Note that for any fixed $p$, $r'(g,p)$ is superadditive in $g$;
we may thus reduce to the case where $A$ is indecomposable, which as noted above implies that $G$ acts indecomposably on $V = \QQ^{2g}$.
Using Corollary~\ref{C:imprimitive3} and Lemma~\ref{L:imprimitive}, we may also deduce the claim in case $G^\circ$ does not act isotypically on $V$ (the exceptional case of Lemma~\ref{L:imprimitive} cannot occur for $g \geq 3$).
We may thus assume hereafter that $G^\circ$ acts isotypically on $V$. 

Let $W$ be 
an irreducible $G^\circ$-representation occurring in $V$, and put
\[
D := \End_{G^\circ}(W), \qquad M := \End_{G^\circ}(V), \qquad  F := Z(D) = Z(M).
\]
By Schur's lemma, $D$ is a division algebra, $M$ is a matrix ring over $D$,
and $T := \image(G^\circ \to M^\times)$ is a torus which splits over $F$.
If we define $H := \ker(G \to \Out(G^\circ))$, we obtain an induced injective morphism $H/G^\circ \hookrightarrow M^\times/T$.
Meanwhile, since $V$ is $G^\circ$-isotypical,
$G/H$ acts faithfully on the set of isomorphism classes of 
$G^\circ$-constituents of $W \otimes_{\QQ} \overline{\QQ}$;
this implies that
the map $G \to \Aut(F/\QQ)$ induces an injective morphism $G/H \hookrightarrow \Aut(F/\QQ)$.

Define the following positive integers:
\begin{align*}
a &:= [F:\QQ]; \\
b &:= \rank_F D = \mbox{the $G^\circ$-multiplicity of $W \otimes_F \overline{F}$}; \\
c &:= \rank_D M = \mbox{the $G^\circ$-multiplicity of $W$ in $V$}; \\
d &:= \frac{2g}{abc} = \mbox{the $\overline{\QQ}$-dimension of a $G^\circ$-constituent of $V \otimes_\QQ \overline{\QQ}$.}
\end{align*}
In this notation, $G/H$ injects into $S_a$ while $H/G^\circ$ injects into a subgroup of a twisted form of $\GL(bc)_F$. In light of Remark~\ref{R:twisted forms}, it follows that the $p$-adic valuation of $\#\pi_0(G)$ is at most
\begin{equation} \label{eq:decomp bound}
r(\Aut(F/\QQ), p) + r(\GL(bc)_F, p) \leq r(\GL(abc)_\QQ, p).
\end{equation}
If $d  > 1$, then Remark~\ref{R:double} immediately yields the desired bound (even for $p=2$).
We may thus assume hereafter that $d=1$, which implies that $G^\circ$ is abelian and hence a torus. In this case, $F$ must be totally imaginary; it is in fact the CM field associated to the unique simple isogeny factor of $A$. 

If $p>2$ is a Fermat prime, then the only way to violate the desired bound would be to have equality in \eqref{eq:decomp bound}, which can only occur if $F \subseteq \QQ(\mu_p)$ 
in light of Corollary~\ref{C:imprimitive2} (the degree-$p$ subextension of $\QQ(\mu_{p^2})$ cannot be a compositum of CM fields because its degree is odd).
Since $F$ is totally imaginary, this would force $F = \QQ(\mu_p)$. However, under these conditions, we may invoke Remark~\ref{R:sharp Minkowski}: the reduction of $G^\circ$ itself always has order divisible by $p$, yielding exactly the correct bound.

If $p=2$ and $F \cap \QQ(\mu_{2^\infty}) = \QQ$, then $r(\GL(n)_F, 2) = r(\GL(n)_\QQ, 2)$
(see Remark~\ref{R:Minkowski K}) and so we may invoke Lemma~\ref{L:imprimitive}
to deduce the desired result (again assuming that $g \geq 3$).
If instead $\QQ(\mu_4) \subseteq F$, then Remark~\ref{R:base extension} gives a valuation bound which is only off by 2, and again this discrepancy is accounted for by Remark~\ref{R:sharp Minkowski} (the reduction of $G^\circ$ itself always has order divisible by $4$).
Otherwise, $F' := F \cap \QQ(\mu_8)$ must equal one of $\QQ(\sqrt{2})$ or $\QQ(\sqrt{-2})$.
In case $F' = \QQ(\sqrt{-2})$, Remark~\ref{R:Minkowski K} implies that $r(\GL(n)_{F'}, 2) = r(\GL(n)_\QQ, 2)$; we thus  obtain an upper bound of 
\[
r(\Aut(F'/\QQ), 2) + r(\GL(abc/2)_{F'}, 2) =
1 + r(\GL(g)_{\QQ}, 2)
\]
and again Lemma~\ref{L:imprimitive} settles the question (for $g \geq 3$).

In case $F' = \QQ(\sqrt{2})$, we must argue a bit more carefully. Since $F'$ is Galois and totally real, we must have $F' \neq F$ and $r(\Aut(F/\QQ),2) = 1 + r(\Aut(F/F'), 2)$. By Remark~\ref{R:Minkowski K}, we have
\[
r(\GL(n)_F,2) = r(\GL(n)_{F'}, 2) = r(\GL(n)_\QQ, 2) + \left\lfloor \frac{n}{2} \right\rfloor.
\]
Note finally that the reduction of $G^\circ$ always has order divisible by 2. 
From \eqref{eq:decomp bound}, we now obtain an upper bound of
\[
r(\Aut(F/F'), 2) + r(\GL(bc)_\QQ, 2) + \left\lfloor\frac{bc}{2} \right\rfloor
\]
which by Lemma~\ref{L:imprimitive} (and the fact that $a \geq 4$) is itself bounded above by
\[
r(\GL(g)_{\QQ},2) + \left\lfloor\frac{g}{a} \right\rfloor \leq
r(\GL(g)_{\QQ},2) + \left\lfloor\frac{g}{4} \right\rfloor.
\]
This gives the desired bound once more.
\end{proof}

\section{Lower bounds}

To conclude, we establish the lower bound assertion of Theorem~\ref{T:endomorphism field}
by twisting powers of CM abelian varieties.

\begin{defn}
We briefly recall \cite[Definition~2.20]{fkrs}. Let $A$ be an abelian variety over a number field $K$, let $L/K$ be a finite Galois extension, and let $f: \Gal(L/K) \to \Aut(A_L)$ be a 1-cocycle.
Then there exists an abelian variety $A^f$ over $K$ equipped with an isomorphism $A^f_L \cong A_L$ such that the action of $\tau \in G_K$
on $A^f(\overline{K}) \cong A^f_L(\overline{K})$ corresponds to the action of $f(\tau) \tau$
on $A(\overline{K}) \cong A_L(\overline{K})$.
The isomorphism $A^f_L \cong A_L$ induces an isomorphism $\End(A^f_L) \cong \End(A_L)$ in which corresponding elements $\alpha \in \End(A^f_L), \beta \in \End(A_L)$ satisfy
the relation
\begin{equation} \label{eq:twist relation}
\tau(\alpha) = f(\tau)\tau(\beta) f(\tau)^{-1}.
\end{equation}
\end{defn}

We use the twisting setup in the following setting. 
\begin{defn} \label{D:twisting setup}
Fix a prime $p$ and a positive integer $m$.
Let $A_0$ be an abelian variety of dimension $g_0$ over some number field $K$,
such that $A_0$ has complex multiplication by the ring of integers $\frako_M$ of a subfield $M$ of $\QQ(\mu_{p^m})$; put $d = [\QQ(\mu_{p^m}):M]$.
(Note that we cannot hope to fix the field $K$, because its degree over $\QQ$ is related to the class number of $M$.)
Let $G_0 \subseteq \GL(d, M)$ be a subgroup of order $p^m$ stable under $\Gal(M/\QQ)$
and identify $G_0$ with a subgroup of $\Aut(A^d_{0,KM})$.
Put $A_1 = A_0^{dn}$ for some positive integer $n$;
then $G_1 = G_0 \wr S_n$ may be identified with a subgroup of $\Aut(A_{1,KM})$ stable under $\Gal(KM/K)$.
Let $G$ be the image of $G_1$ under the map $\GL(dn, M) \to \PGL(dn, M)$.

Choose an $S_n$-extension $L_0/K$ linearly disjoint from $KM$, so that $L_0 M/KM$ is again an $S_n$-extension.
Note that for a ``generic''  $C_{p^m}$-extension $L_1/L_0 M$, the Galois closure $L_2$ of $L_1$ over $M$ will have Galois group $G_0 \wr S_n$.
Using class field theory, we may further ensure that $L_2$ is also Galois over $K$ and that there exists a 1-cocycle $f: \Gal(L_2/K) \to \Aut(A_{1,L_2})$
whose restriction to $\Gal(L_2/KM)$ is the preceding identification of the latter with
$G_0 \wr S_n \subseteq \Aut(A_{1,KM})$.

Put $A = A_1^f$ and let $L$ be the endomorphism field of $A$. Then $KM \subseteq L$
and \eqref{eq:twist relation} implies the existence of a surjective morphism from $\Gal(L/KM)$ to $G$, but not in general to $G_1$.
\end{defn}

\begin{theorem}
For each prime $p$, there exists an abelian variety $A$ of dimension $g$ over some number field $K$ such that the $p$-part of $[L:K]$ is at least $p^{r'_{g,p}}$.
\end{theorem}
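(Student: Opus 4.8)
The plan is to feed appropriate data into the twisting construction of Definition~\ref{D:twisting setup} and extract a lower bound for the $p$-adic valuation $v_p([L:K])$ from
\[
v_p([L:K]) = v_p([L:KM]) + v_p([KM:K]) \geq v_p(\#G) + v_p([M:\QQ]),
\]
where the first inequality uses the surjection $\Gal(L/KM) \twoheadrightarrow G$ supplied by Definition~\ref{D:twisting setup} and the second holds as soon as $K \cap M = \QQ$; we always arrange this, e.g.\ by taking $K$ to be a number field over which $A_0$ is defined but which does not contain $M$ (harmless, since the class field theory of Definition~\ref{D:twisting setup} builds extensions of $M$, not of $K$). Recall that $G$ is the image of $G_1 = G_0 \wr S_n$ in $\PGL(dn,M)$, so $\#G$ equals $\#G_1 = p^{mn} n!$ divided by the order of the diagonally embedded copy of $G_0 \cap M^\times$; thus $v_p(\#G)$ records precisely how much of $G_0$ survives in $\PGL$. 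The trichotomy defining $r'(g,p)$ will reflect exactly whether $\QQ(\mu_{p^m})$ admits a CM subfield $M$ small enough that $G_0$ can be chosen non-scalar. If $r'(g,p)=0$ there is nothing to prove, so assume otherwise.

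\emph{Case $p$ odd, not a Fermat prime.} Then $p-1$ is not a power of $2$, so its odd part $d$ is at least $3$, and $\QQ(\mu_p)$ has a smallest CM subfield $M$, with $[M:\QQ] = (p-1)/d = 2^{v_2(p-1)}$. Let $A_0$ have dimension $g_0 := [M:\QQ]/2$ with CM by $\frako_M$; take $m=1$, so $[\QQ(\mu_p):M]=d$, and let $G_0 \cong C_p$ be generated by the action on $A_0^{d}$ of multiplication by $\zeta_p$ on $\frako_{\QQ(\mu_p)}$ viewed as an $\frako_M$-module, which lies in $\GL(d,\frako_M) \subseteq \Aut(A_0^{d})$ and is $\Gal(M/\QQ)$-stable. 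Since $d\geq 2$ the generator is non-scalar, and since $\zeta_p \notin M$ we have $G_0 \cap M^\times = 1$; hence $\#G = \#G_1 = p^n n!$ and $v_p(\#G) = n + r(S_n,p)$. Choosing $n = \lfloor 2g/(p-1)\rfloor$ makes $\dim A_1 = (p-1)n/2 \leq g$, and the identity $\lfloor\lfloor x\rfloor/p^i\rfloor = \lfloor x/p^i\rfloor$ gives $n + r(S_n,p) = r(\GL(2g)_\QQ,p) = r(g,p) = r'(g,p)$. Padding $A_1$ with a power of an elliptic curve having no isogeny factor in common with $A_1$, to reach dimension $g$, only enlarges the endomorphism field, so this case is complete.

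\emph{Case $p$ a Fermat prime, or $p=2$.} Now $\QQ(\mu_p)$ (resp.\ $\QQ(\mu_4)=\QQ(i)$) has no proper CM subfield, so $M$ must be taken maximal and $G_0$ is forced to be scalar; accordingly set $d=1$, put $M=\QQ(\mu_p)$ (resp.\ $M=\QQ(i)$), and let $G_0$ be the $p$-primary part of the roots of unity of $M$: this is $\langle\zeta_p\rangle$ of order $p$ when $p$ is Fermat (so $m=1$) and $\langle i\rangle$ of order $4$ when $p=2$ (so $m=2$). Let $A_0$ have dimension $g_0 = [M:\QQ]/2$ with CM by $\frako_M$, and set $G_1 = G_0 \wr S_n \subseteq \GL(n,\frako_M) \subseteq \Aut(A_0^n)$, which is $\Gal(M/\QQ)$-stable. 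Since $G_0$ is scalar, $G_0 \cap M^\times = G_0$, so $\#G = \#G_1/p^m = p^{m(n-1)}n!$ and $v_p(\#G) = m(n-1) + r(S_n,p)$. When $p$ is Fermat, $v_p([M:\QQ]) = v_p(p-1) = 0$, so with $n = \lfloor 2g/(p-1)\rfloor$ and padding we get $v_p([L:K]) \geq n - 1 + r(S_n,p) = r(g,p) - 1 = r'(g,p)$. When $p=2$ we have $v_2([M:\QQ]) = 1$ and $m=2$, so with $n=g$ we get $v_2([L:K]) \geq 2(g-1) + r(S_g,2) + 1 = 2g - 1 + r(S_g,2)$, which equals $r(\GL(g)_{\QQ(i)},2) - 1 = r'(g,2)$ via $r(\GL(g)_{\QQ(i)},2) = 2g + r(S_g,2)$ from Remark~\ref{R:Minkowski K}.

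The group-theoretic and arithmetic parts above are routine once $M$ and $G_0$ are in hand; the substantive input --- the existence of the Galois extension $L_2/K$ with the prescribed group and of a compatible $1$-cocycle, together with the required linear disjointness --- is exactly what Definition~\ref{D:twisting setup} (after \cite{fkrs}) packages, so the real task in a full write-up is only to verify that each triple $(A_0,M,G_0)$ and each $n$ chosen above meets the hypotheses there. The one point demanding genuine care is the clean separation of the base field $K$ from the CM field $M$ in the case $p=2$: the factor $v_2([KM:K]) = 1$ is indispensable, since it compensates for losing $v_2(\#G_0) = 2$ rather than $1$ on passing to $\PGL$, so one must confirm that $K$ can be kept disjoint from $\QQ(i)$ while still supporting the class field theory --- which is fine precisely because that class field theory is performed over $M$. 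I expect this interplay, rather than any individual estimate, to be the thing most in need of being gotten right.
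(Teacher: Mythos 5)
Your proposal is correct and follows essentially the same route as the paper: feed CM abelian varieties into the twisting setup of Definition~\ref{D:twisting setup}, choosing $M$ with $[\QQ(\mu_{p^m}):M]>1$ odd when $p-1$ is not a power of $2$ so that nothing is lost in passing to $\PGL$, and otherwise taking $M$ maximal and accounting for the scalar loss (compensated for $p=2$ by $[KM:K]=2$). Your explicit bookkeeping --- in particular the choice $n=\lfloor 2g/(p-1)\rfloor$, which is what the dimension count $\dim A_1 = n(p-1)/2\le g$ actually requires --- is carried out correctly and supplies the arithmetic the paper leaves implicit.
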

\begin{proof}
Suppose first that $p-1$ is not a power of 2 (i.e., $p$ is odd and not a Fermat prime).
Then there exists a subfield $M$ of $\QQ(\mu_p)$ whose index $\ell$ is an odd prime divisor of $p-1$. 
Applying Definition~\ref{D:twisting setup} with $m=1$, $n = \left\lfloor \frac{g}{p-1} \right\rfloor$ then yields the desired result; note that in this case, there is no harm to take $K$ to contain $M$, which simplifies the analysis somewhat.

Suppose next that $p$ is a Fermat prime; we may assume that $r_{g,p} \geq 1$. The previous construction breaks down because $p-1$ admits no odd prime divisor,
so we can only apply Definition~\ref{D:twisting setup} with $m=1$, $M = \QQ(\mu_p)$, $n = \left\lfloor \frac{g}{p-1} \right\rfloor$. Note that we now lose one factor of $p$ to the quotient map $G_1 \to G$, so again we get the desired result.

Suppose finally that $p=2$. Apply Definition~\ref{D:twisting setup} with $m=2$, $M = \QQ(i)$, $n = g$; note that in this case we may even take $K = \QQ$. This time, we lose two factors of 2 to the quotient map $G_1 \to G$, but gain one back from the extension $M/\QQ$. This proves the claim once more.
\end{proof}

\end{document}